
\documentclass[a4paper]{article}

\usepackage[english]{babel}
\usepackage{amsmath, amsthm, amsfonts}
\usepackage{amssymb}
\usepackage{graphics,color}
\usepackage{pstricks}
\usepackage{pstricks-add,pst-node}
\usepackage{subfigure}

\RequirePackage[latin1]{inputenc}    

\newtheorem{thm}{Theorem}
\newtheorem{thm*}{Theorem*}
\newtheorem{lem}[thm]{Lemma}
\newtheorem{cor}[thm]{Corollary}
\newtheorem{conj}[thm]{Conjecture}
\newtheorem{prop}[thm]{Proposition}
\newtheorem{rem}[thm]{Remark}

\DeclareMathOperator{\cro}{cr}

\begin{document}


\def\TitreRapport{
    Drawing disconnected graphs on the Klein bottle
 }

\def\NomsAuteurs{
    Laurent Beaudou${}^1$, Antoine Gerbaud${}^1$, Roland Grappe${}^2$ and Fr\'ed\'eric Palesi${}^1$
}

\def\DateRapport{
    14 mars 2008
}

\def\AdressePostale{
    ${}^1$Institut Fourier\\
    100, rue des Maths\\
    38\,402 St-Martin d'H\`eres -- FRANCE\\
    \medskip
    ${}^2$Laboratoire G-Scop\\
    46 avenue F\'{e}lix Viallet\\
    38\,000 Grenoble -- FRANCE.
}

\def\AdresseMail{
  laurent.beaudou@ujf-grenoble.fr\\
  antoine.gerbaud@ujf-grenoble.fr\\
  roland.grappe@g-scop.inpg.fr\\
  frederic.palesi@ujf-grenoble.fr
}

\def\ResumeAnglais{
We prove that two disjoint graphs must always be drawn separately on the Klein bottle in order to minimize the crossing number of the whole drawing.
    \\[1mm]
    {\bf Keywords: } Klein bottle, topological graph theory, crossing number.
}

\def\ResumeFrancais{
Dans ce rapport, nous prouvons que deux graphes disjoints doivent toujours \^etre dessin\'es s\'epar\'ement sur la bouteille de Klein lorsque le nombre de croisements du dessin est minimal.
    \\[1mm]
    {\bf Mots-cl\'es: } bouteille de Klein, graphes topologiques, croisements.
}



\def\Sfill#1{\vspace*{\stretch{#1}}}

\thispagestyle{empty}
\begin{center}
{\bf \Huge Institut Fourier} \\ \Sfill{5}
{\Large Institut Fourier}\\[1mm] \Sfill{5}
{\normalsize Unit\'e Mixte de Recherche 5582}\\
{\normalsize CNRS -- Universit\'e Joseph Fourier}\\ \Sfill{10}
\begin{tabular}{|p{11cm}|}
\hline 
\begin{center}
\vspace{4mm}
\baselineskip=1.3\normalbaselineskip
{\bf\Large \TitreRapport}\\[10mm]
{\bf\large \NomsAuteurs}\\[10mm]
\baselineskip=\normalbaselineskip 
{\bf \DateRapport}
\end{center}
\\[4mm] \hline
\end{tabular} \\ \Sfill{10}
\end{center}
\newpage

\thispagestyle{empty}
\begin{center}
\baselineskip=1.3\normalbaselineskip
{\bf\Large \TitreRapport}\\[8mm]
{\bf\large \NomsAuteurs}\\[4mm]
\baselineskip=\normalbaselineskip 
\AdressePostale\\[4mm]
\AdresseMail\\[10mm]
{\bf Abstract/R\'esum\'e}
\end{center}
\ResumeAnglais \\ [4mm]
\ResumeFrancais
\newpage


\section*{Introduction}

All graphs in this paper are finite, undirected and without loops. A path of $G$ is a sequence of vertices $v_0 , \ldots , v_{k}$ of $G$ such that for each integer $i$ between $1$ and $k-1$, $v_iv_{i+1}$ is an edge of $G$ and all edges are distinct. A circuit of $G$ is a path $v_0,\ldots,v_k$ such that $v_0=v_k$. A graph with a circuit that visits each of its edges exactly once is called eulerian. A graph is connected if for every pair of vertices $u$ and $v$ there is a path $v_0,\ldots,v_k$ such that $v_0=u$ and $v_k=v$. We refer to \cite{bollobas_98} for an introduction to graph theory. 

A surface is a two-dimensional manifold, with or without boundary. According to \cite{brahana_23}, there are two infinite classes of compact connected surfaces without boundary: the orientable surfaces homeomorphic to a sphere with handles attached, and the non-orientable surfaces homeomorphic to a connected sum of projective planes. For an orientable surface, the number of handles is called the orientable genus. For a non-orientable surface, the number of projective planes is called the non-orientable genus. The non-orientable surfaces of genus $1$ and $2$ are the projective plane and the Klein bottle, respectively. Formal definitions of these surfaces can be found in \cite{stillwell_80}. 

Every curve considered throughout this paper is undirected and we do not distinguish between a curve and its image. A drawing of a graph $G$ on a surface $\Sigma$ is a representation $\Psi$ of $G$ on $\Sigma$ where vertices are distinct points of $\Sigma$, and edges are curves of $\Sigma$ joining the points corresponding to their endvertices. A drawing is proper if edges are simple curves without vertices of the graph in their interiors. A crossing is a transversal intersection of two curves on $\Sigma$. In this paper, we restrict our attention to proper drawings where two incident edges do not cross each other, two non-incident edges cross at most once and no more than two edges cross at a single point. The crossing number of a drawing $\Psi$, denoted by $\cro(\Psi)$, is the number of crossings between each pair of curves in $\Psi$. The crossing number of a graph $G$ on a surface $\Sigma$ is the minimum crossing number among all drawing of $G$ on $\Sigma$. A drawing that achieves the crossing number of a graph is said optimal. A drawing with no crossing is an embedding. For background material about topological graph theory, the reader can refer to \cite{mohar_01}. 

The crossing number of a graph on a surface leads to many unsolved problems, see \cite{erdos_73,toth_00}. DeVos, Mohar and Samal conjectured the following in \cite{devos_08}.

\begin{conj}
\label{conj}
  Let $G$ be the disjoint union of two connected graphs $H$ and $K$ and let $\Sigma$ be a surface. For every optimal drawing of $G$ on $\Sigma$, the restrictions to $H$ and $K$ do not intersect.
\end{conj}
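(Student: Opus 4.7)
The strategy is a contradiction argument: assume there is an optimal drawing $\Psi$ of $G=H\sqcup K$ on $\Sigma$ such that $\Psi_H$ and $\Psi_K$ cross at least once, then construct a drawing of $G$ with strictly fewer crossings.

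First I fix such an optimal $\Psi$ and let $c\ge1$ denote the number of $H$-$K$ crossings. Take a small regular neighbourhood $N(H)\subset\Sigma$ of the image of $\Psi_H$; since $H$ is a graph, $N(H)$ is a compact surface with boundary that deformation-retracts onto $\Psi_H$. The complement $\Sigma\setminus\overline{N(H)}$ decomposes into open components $F_1,\dots,F_m$ in bijection with the faces of $\Psi_H$, and each $\overline{F_i}$ is a compact surface with boundary. The Euler genus of $\Sigma$ is then distributed between $N(H)$ and the pieces $\overline{F_i}$ via an Euler-characteristic accounting.

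Next I analyse how $\Psi_K$ sits in this decomposition. Outside $N(H)$, $\Psi_K$ breaks into finitely many arcs, each contained in a single $\overline{F_i}$ with endpoints either at vertices of $K$ or at the $c$ points of $\partial N(H)$ corresponding to $H$-$K$ crossings. Build an auxiliary multigraph $B$ on vertex set $V(K)\cup\{F_1,\dots,F_m\}$, adding one edge per maximal arc of $\Psi_K$ in a face; since $K$ is connected, $B$ is connected, so the faces visited by $\Psi_K$ form a single attachment pattern with a tree-like structure after contracting $\Psi_K$. The goal is to select a distinguished face $F_{j^\star}$ and reroute every arc of $\Psi_K$ lying in any other face into $F_{j^\star}$ along carefully chosen pushing curves, thereby removing all $c$ $H$-$K$ crossings while introducing at most $c-1$ new $K$-$K$ crossings, which would strictly decrease $\cro(\Psi)$ and yield the desired contradiction.

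The main obstacle---and precisely why the conjecture is open in full generality---is guaranteeing that some face $\overline{F_{j^\star}}$ carries enough Euler genus to support a drawing of $K$ with crossing number at most $\cro(\Psi_K)$. An optimal $\Psi_K$ may essentially exploit handles or crosscaps of $\Sigma$ which, in the decomposition induced by $\Psi_H$, are split between $N(H)$ and several of the $\overline{F_i}$; any rerouting of $K$ into a single face could then create many additional self-crossings, more than compensating for the removed $H$-$K$ crossings. A complete argument therefore needs (i) a precise accounting of how the Euler genus of $\Sigma$ distributes among $N(H)$ and the $\overline{F_i}$, and (ii) a surgery scheme that trades each removed $H$-$K$ crossing for strictly fewer new $K$-$K$ crossings. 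Step~(ii) is the heart of the difficulty; without finer hypotheses on $\Sigma$---such as the small Euler genus exploited in the Klein bottle case---I do not see a general mechanism that forces such a surgery to succeed, and I expect any proof of the full conjecture to require a new structural insight into how optimal drawings of connected subgraphs share the topology of $\Sigma$.
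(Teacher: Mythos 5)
The statement you are attempting is stated in the paper as a conjecture, and the paper does not prove it: the authors explicitly write that the problem remains open in general, and they only establish the special cases of the Klein bottle (Theorem \ref{thm:main}) and, in a closing remark, the projective plane. Your proposal does not close this gap either, and you say so yourself. The decisive step --- rerouting all arcs of $\Psi_K$ into a single face $F_{j^\star}$ of $\Psi_H$ while trading the $c$ removed $H$--$K$ crossings for at most $c-1$ new $K$--$K$ crossings --- is announced as a goal but never justified, and it is exactly where the argument can fail: a single face $\overline{F_{j^\star}}$ may not carry enough Euler genus to accommodate a drawing of $K$ with few self-crossings, and no bound of the form ``at most $c-1$ new crossings'' is available without controlling how the genus of $\Sigma$ distributes between $N(H)$ and the faces. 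What you have written is an accurate description of the obstruction, not a proof.

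It is worth contrasting this with how the paper actually handles the one case it does prove. The Klein bottle argument does not proceed by local rerouting of $K$ into a face of $H$. Instead, after an eulerian doubling reduction, the authors decompose the embedded subdrawings $\Psi_H$ and $\Psi_K$ into circuits via the De Graaf--Schrijver theorem (Theorem \ref{thm:de_graaf}), classify those circuits by free homotopy type ($a$-, $b$-, $m$- and $e$-circuits, using Negami's classification of essential simple closed curves on the Klein bottle), and derive minimax formulas (Propositions \ref{pro1} and \ref{pro2}) for the numbers of edge-disjoint one-sided circuits and $a$-circuits. These yield lower bounds on $\cro(\Psi)$ in terms of the quantities $\cro([a],\Psi_H)$, $\cro([b],\Psi_H)$, $\cro([m],\Psi_K)$, and so on, which are then beaten, through an explicit case analysis, by the crossing counts of concrete surgeries that remove a crosscap (Proposition \ref{switch}) or cut along $m$ (Proposition \ref{twist}). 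This global, homotopy-class bookkeeping is precisely the ``finer hypothesis on $\Sigma$'' you correctly suspect is needed, and it depends on the very short list of homotopy classes available on the Klein bottle; it does not generalize in any evident way, which is consistent with your conclusion that the full conjecture requires a new idea.
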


This conjecture is obviously true for the sphere or equivalently for the Euclidean plane. It was announced proved for the projective plane in \cite{devos_08}. The problem remains open in the general case. In this paper, we prove that Conjecture \ref{conj} holds if $\Sigma$ is the Klein bottle.

\begin{thm}
\label{thm:main}
  Let $G$ be the disjoint union of two connected graphs $H$ and $K$. For every optimal drawing of $G$ on the Klein bottle, the restrictions to $H$ and $K$ do not cross.
\end{thm}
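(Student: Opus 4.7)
The plan is to argue by contradiction. Suppose $\Psi$ is an optimal drawing of $G = H \sqcup K$ on the Klein bottle $\Sigma$ in which the images of $H$ and $K$ share at least one crossing; I will construct a drawing of $G$ with strictly fewer total crossings, contradicting optimality. Since cross-crossings between $H$ and $K$ may be traded against internal crossings of either graph, the modification has to be accounted for carefully.

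The first step is to planarize. Replace each self-intersection of $\Psi(H)$ by a dummy degree-four vertex to obtain a genuinely embedded, connected graph $\widehat H \subset \Sigma$. The complement $\Sigma \setminus \widehat H$ splits into finitely many open faces whose topological types are sharply constrained by $\chi(\Sigma) = 0$ and by the classification of non-orientable surfaces. The image $\Psi(K)$, cut along its crossings with $\widehat H$, decomposes into arcs each contained in a single face. I then form the face-transition multigraph $D$ whose vertices are the faces of $\widehat H$ visited by $\Psi(K)$ and whose edges are the $H$-$K$ crossings of $\Psi$; connectedness of $K$ ensures that $D$ is connected.

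If $D$ contains a cycle, then two successive crossings of $\Psi(K)$ with $\widehat H$ occur along a common edge $e$, and the arc of $\Psi(K)$ between them can be homotoped across $e$ into the face on the other side. This eliminates two $H$-$K$ crossings at the price of at most a controllable number of $K$-$K$ self-crossings within a single face, and a careful bookkeeping should yield a strict decrease in the total crossing number. I may therefore assume that $D$ is a tree.

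The remaining case is where the main difficulty lies, and it is here that the specific topology of the Klein bottle is essential. Because its non-orientable genus is only $2$, I would classify the possible face patterns of $\widehat H$ and exhibit a single face $f^\star$ whose underlying surface is topologically rich enough to host an optimal drawing of $K$. Pushing $\Psi(K)$ entirely into $f^\star$ produces a separated drawing with total crossing number at most $\cro_\Sigma(H) + \cro_\Sigma(K)$, which is strictly less than $\cro(\Psi)$ since the latter already has at least one $H$-$K$ crossing on top of the optimal internal counts. The principal obstacle is this last step: the existence of $f^\star$ depends essentially on the Klein bottle's low genus, which is precisely why this strategy does not extend to higher-genus surfaces and why Conjecture~\ref{conj} remains open in general.
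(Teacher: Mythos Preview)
Your outline diverges sharply from the paper's proof and, as written, has genuine gaps in both branches. In the cycle case, a cycle in the face-transition multigraph $D$ does \emph{not} force two successive $H$--$K$ crossings to lie on a common edge $e$ of $\widehat H$: the crossings along a $D$-cycle may all sit on distinct edges of $\widehat H$. Even granting such an $e$, sliding the arc of $K$ across it can create one new $K$--$K$ crossing for every other $K$-arc already in the target face, so ``careful bookkeeping should yield a strict decrease'' is precisely the unproven heart of the matter. The tree case is worse. If $\widehat H$ is cellularly embedded (all faces open disks), then \emph{no} face $f^\star$ is rich enough to host an optimal drawing of a non-planar $K$; the face you are looking for may simply fail to exist. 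And your claimed bound ``at most $\cro_\Sigma(H)+\cro_\Sigma(K)$'' cannot be right in any case, because the $H$-part of your new drawing is still $\Psi_H$ with its original self-crossings, which need not equal $\cro_\Sigma(H)$.

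The paper's argument is entirely different and quantitative. It first reduces (by planarising self-crossings and then edge-doubling) to the case where $H,K$ are Eulerian and $\Psi_H,\Psi_K$ are genuine embeddings. Using the de~Graaf--Schrijver circuit decomposition (Theorem~\ref{thm:de_graaf}) together with Propositions~\ref{pro1} and~\ref{pro2}, it extracts explicit lower bounds on $\cro(\Psi)$ in terms of the six numbers $\cro([a],\Psi_H)$, $\cro([b],\Psi_H)$, $\cro([m],\Psi_H)$ and their $K$-analogues. Against these lower bounds it plays off explicit surgeries (Propositions~\ref{switch} and~\ref{twist}, cutting along curves homotopic to $a$ or to $a$ and $m$) that redraw one of $H,K$ on a projective plane or on a disk with a precisely computed self-crossing cost; a five-way numerical case analysis (Lemma~\ref{lem:disjoint}) shows that some surgery always beats $\cro(\Psi)$. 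The point your approach misses is that one must sometimes \emph{redraw} one of the graphs on a strictly smaller subsurface, accepting extra self-crossings there, and then prove arithmetically that these are outnumbered by the $H$--$K$ crossings saved; this trade-off cannot be realised by pushing $K$ into a face of the existing $\widehat H$.
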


We introduce the following notations. A closed curve is one-sided if its neighborhood is a Mˆbius strip, two-sided otherwise. There exist two non freely homotopic one-sided simple curves $a$ and $b$ on the Klein bottle, a two-sided simple curve $m$ that cuts open the Klein bottle into a cylinder, and a two-sided simple curve $e$ that separates the Klein bottle into two Mˆbius strips. A closed curve not contractible is called essential. According to Negami in \cite{negami_97}, each essential simple closed curve on the Klein bottle is freely homotopic to either $a$, $b$, $m$ or $e$. 

For each curve $c$ on $\Sigma$, $[c]$ denotes the set of curves freely homotopic to $c$. For each couple of curves $(c,d)$, $\cro([c],[d])$ denotes the minimum number of crossings, counting multiplicities, taken over all couples of $[c]\times [d]$. Let $c$ be a curve on $\Sigma$ and $I$ a collection of curves. The number of crossings between $c$ and $I$ is denoted by $\cro(c,I)$. The minimum of $\cro(c',I)$ taken over all curves $c'$ in $[c]$ is denoted by $\cro([c],I)$. If $I$ is a drawing of a graph $G$, the minimum $\cro([c],I)$ is taken on the curves in $[c]$ that do not contain any vertex of $G$.

We define two relations on freely homotopy classes of closed curves on the Klein bottle. Two classes $[c]$ and $[d]$ are said to be orthogonal if $\cro([c],[d])\ge 1$, otherwise disjoint. These definitions slightly differ from those of Luo in \cite{luo_97}. Let $\Psi$ be a drawing on the Klein bottle. The circuits $c$ of $\Psi$ orthogonal to $[a]$ and disjoint from $[b]$ are called $a$-circuits. The circuits orthogonal to $[b]$ and disjoint from $[a]$ are called $b$-circuits. The circuits orthogonal to $[a]$ and $[b]$ are called $m$-circuits. Finally, the circuits orthogonal to $[m]$ and disjoint from $[a]$ and $[b]$ are called $e$-circuits. 

We will apply the following result.
\begin{thm}(De Graaf, Schrijver \cite{de_graaf_97})
\label{thm:de_graaf}
Let $\Psi$ be an embedding of an eulerian graph on a metrizable surface $\Sigma$. Then $\Psi$ can be decomposed into a collection of circuits $I$ such that for each closed curve $c$ on $\Sigma$, 
$$
\cro([c],\Psi)=\sum_{d \in I}\cro([c],[d]).
$$
\end{thm}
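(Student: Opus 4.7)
The plan is to split the identity into two inequalities. The easy direction, $\cro([c],\Psi) \ge \sum_{d\in I}\cro([c],[d])$, holds for every circuit decomposition of $\Psi$: choosing $c' \in [c]$ with $\cro(c',\Psi) = \cro([c],\Psi)$ gives
$$
\cro(c',\Psi) \;=\; \sum_{d\in I} \cro(c',d) \;\ge\; \sum_{d\in I} \cro([c],[d]).
$$
The content of the theorem is therefore the reverse inequality, which demands producing a decomposition $I$ for which a single representative of $[c]$ simultaneously realizes the minimum intersection with every component $d\in I$.

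My candidate decomposition comes from the rotation system of the embedding. Since $\Psi$ is eulerian, each vertex $v$ has even degree $2k_v$, and the half-edges at $v$ inherit a cyclic order from the embedding. At every $v$ I would pick an arbitrary non-crossing perfect matching of these half-edges (for example, pair $e_{2i}$ with $e_{2i+1}$, one of the $C_{k_v}$ non-crossing matchings of the cyclic order) and resolve $v$ locally into $k_v$ disjoint strands inside a small disk around it. Following these resolutions globally decomposes $\Psi$ into a collection $I$ of closed walks. Because every local resolution is non-crossing and $\Psi$ itself has no crossings, the curves in $I$ are \emph{pairwise disjoint} simple closed curves on $\Sigma$ --- a much stronger property than being in pairwise minimal position.

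The reverse inequality then follows by bigon removal. Take $c' \in [c]$ in general position with $\bigcup_{d\in I} d$. Since the circuits in $I$ are pairwise disjoint, every innermost bigon between $c'$ and $\bigcup_d d$ is bounded by an arc of $c'$ and an arc of a single $d\in I$, with interior disjoint from every other component. A free homotopy pushing $c'$ across this bigon decreases $|c'\cap d|$ by two while leaving $|c'\cap d'|$ unchanged for every $d'\neq d$. On the metrizable surface $\Sigma$ this surgery terminates --- equivalently, one may ambient-isotope $\bigcup d$ to its geodesic form and invoke a Hass--Scott style statement for the geodesic representative of $[c]$ --- yielding a representative $c''\in[c]$ with $|c''\cap d| = \cro([c],[d])$ simultaneously for every $d\in I$, whence $\cro([c],\Psi) \le |c''\cap\Psi| = \sum_{d\in I}\cro([c],[d])$. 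The main obstacle I expect is precisely this globalization step: checking that the locally chosen non-crossing matchings genuinely yield disjoint simple closed curves on $\Sigma$ --- even when a walk revisits the same vertex through two different pairs --- and that the bigon-reduction can be carried out uniformly against every component. Both points crucially exploit the disjointness of the $d$'s rather than mere pairwise minimal position, and this is where the metrizability hypothesis on $\Sigma$ enters to guarantee termination.
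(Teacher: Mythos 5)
This is a quoted result: the paper does not prove Theorem~\ref{thm:de_graaf} but cites it from De Graaf and Schrijver \cite{de_graaf_97} (with the earlier special cases due to Lins and Schrijver), so there is no internal proof to compare against. Judged on its own, your proposal has a fatal flaw in its central construction. Your ``easy direction'' is fine and indeed holds for every decomposition, but the decomposition you build for the hard direction --- resolving each vertex by an \emph{arbitrary non-crossing} matching of the half-edges, so as to obtain pairwise disjoint simple closed curves --- does not in general satisfy the conclusion. Concretely, take $\Sigma$ to be the torus and $\Psi$ the standard one-vertex, one-face embedding of the wedge of two loops $\alpha$ and $\beta$ (classes $(1,0)$ and $(0,1)$), whose rotation at the vertex reads $a_1,b_1,a_2,b_2$. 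The only matching that recovers the decomposition $\{\alpha,\beta\}$ is the \emph{crossing} one $\{(a_1,a_2),(b_1,b_2)\}$, which you exclude; either non-crossing matching produces a single circuit $d$ in the class $(1,1)$ or $(1,-1)$. Taking $c$ in the same class as $d$ gives $\sum_{d'\in I}\cro([c],[d'])=\cro([c],[d])=0$, whereas every curve in the class $(1,\pm 1)$ meets $\alpha$ at least once and $\beta$ at least once, so $\cro([c],\Psi)\ge 2$. Thus your candidate $I$ violates the identity, while the valid decomposition $\{\alpha,\beta\}$ consists of circuits that \emph{cross each other} at the vertex.

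The underlying misconception is that the circuits in $I$ need to be (or even can be) pairwise disjoint simple closed curves: the theorem only controls crossings between $c$ and each $d\in I$, not crossings among the $d$'s, and in general the correct decomposition must let circuits cross at vertices (this is exactly what happens in the paper's use of the theorem, where $a$-circuits and $b$-circuits necessarily intersect). Choosing the right matching at each vertex is the whole difficulty --- a local greedy choice cannot work, since which resolution is correct at one vertex depends on global homotopy data --- and this is why the De Graaf--Schrijver proof is a substantial piece of work rather than a bigon-removal exercise. Your second step (simultaneous minimal position of $c$ against a disjoint union of curves) is comparatively unproblematic, but it is predicated on a disjointness property that the correct decomposition does not have.
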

Decomposing a drawing $\Psi$ of a graph $G$ into a collection of circuits $I$ means that each edge of $G$ in $\Psi$ is visited by exactly one element of $I$ and by that element only once. Theorem \ref{thm:de_graaf} was first proved by Lins in \cite{lins_81} for the projective plane and for compact orientable surfaces by Schrijver in \cite{schrijver_91}. 

Schrijver proved in \cite{schrijver_89} an extension of Lins' result to the Klein bottle: the maximum number of pairwise edge-disjoint one-sided circuits equals the minimum number of edges intersecting all one-sided circuits. Using Theorem \ref{thm:de_graaf} and operations on circuits similar to the product studied for oriented surfaces in \cite{luo_97}, we give another expression of this number as follows.  

\begin{prop}
\label{pro1}
Let $\Psi$ be an embedding of an eulerian graph on the Klein bottle. Then the maximum number of pairwise edge-disjoint one-sided circuits equals 
$$
\min(\cro([a],\Psi)+\cro([b],\Psi),\cro([m],\Psi)).
$$
Moreover, we can decompose $\Psi$ into a collection of circuits $I$ that achieves the maximum number of one-sided circuits and such that the number of $m$-circuits in $I$ is
$$
\frac{1}{2} \left( \cro([a],\Psi)+\cro([b],\Psi) - \min(\cro([a],\Psi)+\cro([b],\Psi),\cro([m],\Psi)) \right).
$$
\end{prop}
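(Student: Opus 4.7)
The plan is to apply Theorem~\ref{thm:de_graaf} to obtain an initial circuit decomposition and then modify it through local surgeries (the Klein-bottle analogue of Luo's product~\cite{luo_97}) until it exhibits the maximum number of one-sided circuits.

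For the upper bound, let $C_1,\ldots,C_N$ be pairwise edge-disjoint one-sided circuits of $\Psi$. Each $C_i$ has a M\"obius-strip neighborhood, so its mod-$2$ homology class equals $[a]$ or $[b]$. The intersection form on $H_1(K;\mathbb{Z}/2)$ gives $[m]\cdot[a]=[m]\cdot[b]=1$, $[a]\cdot[a]=[b]\cdot[b]=1$, and $[a]\cdot[b]=0$, so any $m'\in[m]$ avoiding vertices has $\cro(m',C_i)\geq 1$ for each $i$. Edge-disjointness makes these contributions additive in $\cro(m',\Psi)$, so $\cro([m],\Psi)\geq N$; a symmetric argument with $a'\in[a]$ and $b'\in[b]$ gives $\cro([a],\Psi)+\cro([b],\Psi)\geq N$. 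Hence $N\leq \min(\cro([a],\Psi)+\cro([b],\Psi),\cro([m],\Psi))$.

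To achieve this bound, start with a decomposition $I$ supplied by Theorem~\ref{thm:de_graaf}, satisfying $\cro([c],\Psi)=\sum_{d\in I}\cro([c],[d])$ for every closed curve $c$. Classify the circuits into $a$-, $b$-, $m$-, $e$-, or contractible types, with counts $n_a,n_b,n_m,n_e,n_0$. From the Klein-bottle intersection data $\cro([a],[m])=\cro([b],[m])=1$, $\cro([m],[e])=2$, $\cro([m],[m])=0$ and $\cro([a],[a])=\cro([b],[b])=1$, summing per-circuit contributions yields $\cro([a],\Psi)+\cro([b],\Psi)\geq n_a+n_b+2n_m$ and $\cro([m],\Psi)\geq n_a+n_b+2n_e$, so $n_a+n_b$ is at most the claimed minimum, with equality requiring $n_m=0$ or $n_e=0$. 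To close the gap, we perform local re-routings at vertices where two circuits of $I$ meet: combining an $m$- and an $e$-circuit at a shared vertex yields a pair of one-sided circuits of distinct mod-$2$ classes $[a]$ and $[b]$ (since $[m]+0=[a]+[b]$), while combining two $m$-circuits can yield two $e$-circuits. Since $\Psi$ is connected, whenever $n_m\geq 1$ and $n_e\geq 1$ one can locate an $m$- and an $e$-circuit sharing a vertex along any edge path between them, and the re-routing raises $n_a+n_b$ by $2$. Iteration terminates with $n_m=0$ or $n_e=0$; the previously slack inequalities then become equalities, giving $n_a+n_b=\min(\cro([a],\Psi)+\cro([b],\Psi),\cro([m],\Psi))$ and $n_m=\tfrac{1}{2}(\cro([a],\Psi)+\cro([b],\Psi)-\min(\cro([a],\Psi)+\cro([b],\Psi),\cro([m],\Psi)))$.

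The main obstacle is justifying the surgery step rigorously: one must prove that re-routing an $m$- and an $e$-circuit at a shared vertex yields exactly two one-sided circuits of distinct classes (rather than a single longer circuit or two circuits of the same class), and that such a shared vertex always exists when both types are present. This amounts to a careful case analysis of how the cyclic orderings of edges at a vertex (inherited from the embedding) interact with the mod-$2$ homology calculation, essentially transferring Luo's circuit-product framework~\cite{luo_97} from the oriented case to the Klein bottle while keeping track of the two distinct one-sided classes $[a]$ and $[b]$.
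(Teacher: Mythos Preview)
Your approach is essentially the paper's: apply Theorem~\ref{thm:de_graaf}, classify the resulting circuits into types $a,b,m,e$, and trade $(m,e)$ pairs for $(a,b)$ pairs until one of $n_m,n_e$ vanishes. Two points where the paper is cleaner and where your write-up has a gap.

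First, Theorem~\ref{thm:de_graaf} already yields \emph{equalities}
\[
\cro([a],\Psi)=n_a+n_m,\qquad \cro([b],\Psi)=n_b+n_m,\qquad \cro([m],\Psi)=n_a+n_b+2n_e,
\]
using precisely the intersection numbers you list; nothing is ever ``slack'', and once $\min(n_m,n_e)=0$ the two claimed formulas drop out by substitution. Your sentence ``the previously slack inequalities then become equalities'' is not justified as written (surgery does nothing to tighten a genuine inequality), and the repair is simply to record equalities from the outset, as the paper does. The extra operation you mention (two $m$-circuits $\to$ two $e$-circuits) is then seen to be unnecessary.

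Second, you do not need connectedness of $\Psi$ to find an $m$-circuit and an $e$-circuit with a common vertex: $[m]$ and $[e]$ are orthogonal, so any $m$-circuit and any $e$-circuit must cross, and in an embedding curves can only cross at vertices. The paper uses exactly this observation and performs all $r=\min(n_m,n_e)$ exchanges at once. Like you, the paper simply asserts that the union of an $m$-circuit and an $e$-circuit splits into one $a$-circuit and one $b$-circuit, without the detailed case analysis you flag as the main obstacle.
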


Similarly, we express the maximum number of edge-disjoint $a$-circuits. 

\begin{prop}
\label{pro2}
  Let $\Psi$ be an embedding of an eulerian graph on the Klein bottle. Then the maximum number of edge-disjoint $a$-circuits equals 
$$
\min(\cro([a],\Psi),\cro([m],\Psi)).
$$
\end{prop}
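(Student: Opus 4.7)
My plan is to mimic the template set up for Proposition~\ref{pro1}. I first prove that both $\cro([a],\Psi)$ and $\cro([m],\Psi)$ are upper bounds on the number of pairwise edge-disjoint $a$-circuits in $\Psi$, and then produce a decomposition of $\Psi$ via Theorem~\ref{thm:de_graaf} that attains the minimum of these two bounds.

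For the upper bounds, fix $k$ pairwise edge-disjoint $a$-circuits $c_1,\dots,c_k$ in $\Psi$ and a curve $\alpha$ freely homotopic to $a$ that avoids the vertices of the graph. Since the edges of $\Psi$ used by different $c_i$'s are disjoint,
\[
\cro(\alpha,\Psi) \;\geq\; \sum_{i=1}^{k} \cro(\alpha,c_i) \;\geq\; \sum_{i=1}^{k}\cro([a],[c_i]) \;\geq\; k,
\]
the last inequality coming from the orthogonality of each $a$-circuit to $[a]$. Minimising over $\alpha$ yields $k\leq\cro([a],\Psi)$. Repeating the argument with a curve in $[m]$ gives $k\leq\cro([m],\Psi)$ once I check that every $a$-circuit is orthogonal to $[m]$; this is a short case analysis on the free homotopy class of $c_i$, using Negami's classification and the intersection numbers of $[a],[b],[m],[e]$: the two conditions defining an $a$-circuit (orthogonal to $[a]$, disjoint from $[b]$) restrict its class to one whose geometric intersection with $[m]$ is at least one.

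For the lower bound I take a decomposition $I$ of $\Psi$ furnished by Theorem~\ref{thm:de_graaf} and sort its circuits by their free homotopy class. Using the product surgery at shared vertices, the same operation invoked in the proof of Proposition~\ref{pro1}, I would progressively convert $m$-, $e$- and trivial circuits into one-sided circuits, steering the procedure so as to create $a$-circuits rather than $b$-circuits. Since $\cro([a],\Psi)$ and $\cro([m],\Psi)$ are topological invariants of $\Psi$, bookkeeping the contribution of each homotopy class to these two numbers forces the resulting count of $a$-circuits to be exactly $\min(\cro([a],\Psi),\cro([m],\Psi))$. The delicate step is precisely this lower bound: each local surgery must produce circuits that are orthogonal to $[a]$ \emph{and} still disjoint from $[b]$, and preserving disjointness from $[b]$ under a vertex switch will require a parity-type invariant on the algebraic intersection with a representative of $[b]$, analogous to the one implicit in the proof of Proposition~\ref{pro1}. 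Once that invariance is secured, the two upper bounds from the previous paragraph pinch the count to the claimed value.
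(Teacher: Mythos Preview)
Your upper-bound argument is fine and matches the paper. The gap is in the lower bound. You describe a generic ``product surgery at shared vertices'' that will ``steer toward $a$-circuits'', but you never identify \emph{which} surgeries do this, and the worry you raise about a parity invariant to preserve disjointness from $[b]$ is a sign that you are working at the wrong level. The paper's proof operates entirely on free homotopy classes, where the outcome of each merge is determined by the class algebra rather than by local choices. Concretely, starting from a de Graaf--Schrijver decomposition with $n_a,n_b,n_m,n_e$ circuits of each type, the paper performs two specific rounds of surgery: first, as in Proposition~\ref{pro1}, pair off $r=\min(n_m,n_e)$ $m$-circuits with $e$-circuits, each pair splitting into one $a$-circuit and one $b$-circuit; second --- and this is the step your sketch misses --- pair off $s=\min(n_m-r,n_b+r)$ of the (old or newly created) $b$-circuits with the remaining $m$-circuits, each such pair merging into a single $a$-circuit. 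No invariant is needed to certify the result is an $a$-circuit: a $b$-circuit and an $m$-circuit necessarily intersect, and their union lies in the class orthogonal to $[a]$ and disjoint from $[b]$.

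The count then comes out by straight arithmetic using the identities $\cro([a],\Psi)=n_a+n_m$, $\cro([b],\Psi)=n_b+n_m$, $\cro([m],\Psi)=n_a+n_b+2n_e$: one checks $n_a+r+s=n_a+\min(n_m,\,n_b+2n_e)=\min(\cro([a],\Psi),\cro([m],\Psi))$. Your proposal gestures at ``bookkeeping the contribution of each homotopy class'' but without the second surgery $b+m\to a$ there is no mechanism to reach the bound when $n_m>n_e$ (i.e.\ when $\cro([a],\Psi)+\cro([b],\Psi)>\cro([m],\Psi)$), since the first round alone leaves you with only $n_a+n_e<\cro([a],\Psi)$ $a$-circuits in that case.
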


Let us sketch the proof of Theorem \ref{thm:main}. Let $G$ be a disjoint union of two connected graphs $H$ and $K$. To prove Theorem \ref{thm:main}, we start with a drawing $\Psi$ of $G$ where $H$ and $K$ cross each other and we construct by topological surgery another drawing $\Psi'$ of $G$ where $H$ and $K$ do not. Proposition \ref{pro1} and Proposition \ref{pro2} provide lower bounds on the number of crossings between the drawings of two disjoint graphs on the Klein bottle. They allow us to show that the drawing $\Psi'$ has strictly fewer crossings than $\Psi$. 

This paper is organized as follows. Section \ref{Section:operation} deals with drawing graphs on surfaces of smaller genus. Section \ref{Section:pro} is devoted to the proof of Proposition \ref{pro1} and Section \ref{Section:pro2} to the proof of Proposition \ref{pro2}. Theorem \ref{thm:main} is proved in Section \ref{Section:proof}.

\section{Drawing graphs on surfaces of smaller genus}
\label{Section:operation}

In this section, starting with a drawing of a graph on a surface, we define new drawings of the same graph on surfaces of smaller genus. We compute the crossing numbers of these drawings. 

\subsection{Removing a crosscap}

Let $\Sigma$ be the non-orientable surface of genus $g$ and $\Sigma '$ be the non-orientable surface of genus $g-1$.  

\begin{prop}\label{switch}
Let $\Psi$ denote a drawing of a graph $G$ on $\Sigma$. Let $c$ be a simple closed one-sided curve on $\Sigma$ which does not contain any vertex of $G$. There is a drawing $\Psi'$ of $G$ on $\Sigma'$ such that
$$ \cro (\Psi') = \cro (\Psi) + \frac{\cro(c,\Psi)(\cro(c,\Psi) - 1)}{2}.$$
\end{prop}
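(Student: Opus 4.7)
The plan is to perform topological surgery along $c$ to replace the crosscap of $\Sigma$ that $c$ carries by a disk, thereby reducing the non-orientable genus by $1$, and then to estimate the extra crossings created by the pieces of edges that traversed $c$.

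Concretely, I would first cut $\Sigma$ along $c$. A tubular neighborhood of $c$ is a M\"obius strip $M$ whose core is $c$, and cutting $M$ along its core produces a cylinder whose two boundary circles are the original boundary of $M$ (which remains glued to $\Sigma\setminus M$) and a new circle $\gamma$ that ``double covers'' $c$. Gluing a disk $D$ along $\gamma$ yields a closed surface, and a quick Euler characteristic check shows that it is homeomorphic to $\Sigma'$.

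Next I would track what happens to $\Psi$. Set $k=\cro(c,\Psi)$ and label the intersection points $p_1,\dots,p_k$ in the order in which they appear along $c$. After the cut, each $p_i$ gives rise to two points $p_i^+$ and $p_i^-$ on $\gamma$, and a direct inspection of the identification defining the M\"obius strip shows that, in the cyclic order along $\gamma$, these points appear as
$$p_1^+,p_2^+,\dots,p_k^+,p_1^-,p_2^-,\dots,p_k^-.$$
The two fragments of the edge of $G$ that used to cross $c$ at $p_i$ now have $p_i^+$ and $p_i^-$ as new endpoints, so I reconstitute a drawing $\Psi'$ of $G$ on $\Sigma'$ by adding, inside $D$, a simple arc from $p_i^+$ to $p_i^-$ for each $i$.

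Finally I would count crossings. The part of $\Psi$ lying outside a neighborhood of $c$ is untouched and contributes $\cro(\Psi)$ crossings. Inside $D$, any two of the new arcs, say $(p_i^+,p_i^-)$ and $(p_j^+,p_j^-)$ with $i<j$, have endpoints at cyclic positions $i,\ j,\ k+i,\ k+j$, which satisfy $i<j<k+i<k+j$; hence every pair of arcs interleaves on $\partial D$, must cross at least once in $D$, and a straight-chord drawing inside $D$ realizes exactly one crossing per pair. This contributes $\binom{k}{2}=\cro(c,\Psi)(\cro(c,\Psi)-1)/2$ new crossings, giving the stated formula. The main delicate point is verifying the cyclic order on $\gamma$; it is precisely the one-sidedness of $c$ that forces the arcs to be pairwise interleaving, whereas for a two-sided $c$ the cut would produce two boundary circles and the arcs could be redrawn inside two disks without extra crossings.
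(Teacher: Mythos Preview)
Your proof is correct and follows essentially the same surgery argument as the paper: cut along $c$, cap off the resulting boundary circle with a disk $D$, and reconnect the $k=\cro(c,\Psi)$ severed edge-ends by arcs inside $D$ that pairwise cross once. You supply more detail than the paper does---in particular the explicit cyclic order $p_1^+,\dots,p_k^+,p_1^-,\dots,p_k^-$ on $\partial D$ and the observation that one-sidedness is exactly what forces every pair of arcs to interleave---but the underlying idea is identical.
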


\begin{proof}
We cut open $\Sigma$ along $c$ and we obtain the non-orientable surface $\Sigma '$ with one hole. We can glue a disk $D$ along the boundary component to obtain $\Sigma '$. Let $\Psi'$ be the drawing of $G$ defined by restricting $\Psi$ to $\Sigma \setminus c$ and redrawing the edges of $\Psi$ that crossed $c$ on $D$, crossing exactly once pairwise. The crossings of these edges add to the crossings of $\Psi$ to give the correct number of crossings of $\Psi'$ stated in Proposition \ref{switch}.
\end{proof}

The non-orientable surface of genus $g$ can be seen as a sphere with $g$ crosscaps attached. Attaching a crosscap to a surface $\Sigma$ means removing an open disk $D$ of $\Sigma$ and identifying opposite points on the boundary of $D$.

\begin{cor}
\label{cor:planproj}
Let $G$ be the disjoint union of two eulerian connected graphs $H$ and $K$. If $G$ has a drawing on the projective plane such that the restrictions to $H$ and $K$ are embeddings that cross each other, then we can find another drawing of $G$ on the projective plane with strictly fewer crossings such that the restrictions to $H$ and $K$ do not cross. 
\end{cor}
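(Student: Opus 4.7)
My plan is to use the circuit decomposition of Theorem~\ref{thm:de_graaf} for each of $H$ and $K$ to extract a sharp lower bound on $\cro(\Psi)$, and then use Proposition~\ref{switch} to produce a planar redrawing of one of the two graphs that fits inside a face of the other's projective-plane embedding.

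Setting $p=\cro(\Psi)\ge 1$, I would first observe that every crossing of $\Psi$ occurs between an edge of $H$ and an edge of $K$, since both restrictions are embeddings. I would then apply Theorem~\ref{thm:de_graaf} to the embedded eulerian graphs $H$ and $K$, obtaining circuit decompositions $I_H$ and $I_K$ in which every circuit is either contractible or freely homotopic to the unique essential one-sided class on the projective plane; let $h$ and $k$ denote the numbers of one-sided circuits in $I_H$ and $I_K$. Since any two essential simple closed curves on the projective plane cross at least once, each of the $hk$ pairs consisting of a one-sided circuit of $I_H$ and one of $I_K$ contributes at least one transversal intersection; the edge-disjointness of the circuits inside each decomposition, together with the vertex-disjointness of $H$ and $K$, guarantees that these $hk$ intersections are pairwise distinct crossings of $\Psi$. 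This gives the key inequality $p\ge hk$.

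For the upper bound, I would apply Proposition~\ref{switch} to the embedding of $H$ alone, using a one-sided simple closed curve $c$ realising $\cro(c,H)=h$ and avoiding the vertices of $H$. This yields a drawing of $H$ on the sphere with $\binom{h}{2}$ crossings, hence a planar drawing of $H$ using at most $\binom{h}{2}$ crossings; symmetrically, $K$ admits a planar drawing with at most $\binom{k}{2}$ crossings. Assuming without loss of generality $h\le k$, I would construct $\Psi'$ by keeping $K$'s original projective-plane embedding, choosing a small open disk $D$ inside any face of that embedding, and placing a planar drawing of $H$ with $\binom{h}{2}$ crossings inside $D$. The restrictions to $H$ and $K$ do not cross in $\Psi'$, and $\cro(\Psi')=\binom{h}{2}$.

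To conclude, I would verify $\binom{h}{2}<p$: if $h=0$ then $0<1\le p$, while if $h\ge 1$ then $\binom{h}{2}=\frac{h(h-1)}{2}<hk\le p$ because $h-1<h\le k\le 2k$. The step I expect to require the most care is the lower bound $p\ge hk$, where edge-disjointness of the decompositions, vertex-disjointness of $H$ and $K$, and transversality of crossings must be combined carefully to rule out double-counting.
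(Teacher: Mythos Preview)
Your proposal is correct and follows essentially the same route as the paper: derive the lower bound $\cro(\Psi)\ge hk$ from edge-disjoint one-sided circuits in each embedding, then use Proposition~\ref{switch} to redraw the graph with the smaller count (say $H$) in the plane with $\binom{h}{2}$ crossings and place it inside a face of the other's embedding. The only cosmetic difference is that you invoke Theorem~\ref{thm:de_graaf} to produce the circuit decompositions, whereas the paper cites Lins' theorem directly (which is just the projective-plane case of Theorem~\ref{thm:de_graaf}); your treatment of the boundary case $h=0$ is in fact slightly more explicit than the paper's.
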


\begin{proof}
Let $\Psi$ be a drawing of $G$ on the projective plane such that the restriction $\Psi_H$ to $H$ and the restriction $\Psi_K$ to $K$ are embeddings. 

All one-sided simple essential closed curves on the projective plane are freely homotopic. Let $c$ be such a curve. By a theorem of Lins \cite{lins_81}, the maximum number of edge-disjoint one-sided circuits of $\Psi_H$ and $\Psi_K$ are $\cro([c],\Psi_H)$ and $\cro([c],\Psi_K)$, respectively. We may assume that $\cro([c],\Psi_H)$ is smaller than $\cro([c],\Psi_K)$. 

Two one-sided circuits cross at least once. Hence, each one-sided circuit of $\Psi_H$ crosses each one-sided circuit of $\Psi_K$, and
$$
\cro(\Psi)\ge \cro([c],\Psi_H)\times \cro([c],\Psi_K).
$$

Let $c'$ be an one-sided closed curve on the projective plane that achieves $\cro([c],\Psi_H)$. By Proposition \ref{switch}, there exists a drawing $\Psi'_H$ of $G$ on the Euclidean plane such that
\begin{eqnarray*}
\cro (\Psi'_H) &=& \frac{\cro(c',\Psi_H)(\cro(c',\Psi_H) - 1)}{2}
\\ & < & \cro([c],\Psi_H)\times \cro([c],\Psi_K)
\\ & \le & \cro(\Psi).
\end{eqnarray*}

Let $\Psi'$ denote the drawing of $G$ on the projective plane obtained by disjoint union of the drawings $\Psi'_H$ and $\Psi_K$. The drawings $\Psi'_H$ and $\Psi_K$ do not cross each other and since $\Psi_K$ is an embedding, all crossings of $\Psi'$ are crossings of $\Psi'_H$. It follows that the drawing $\Psi'$ of $G$ on the projective plane has strictly fewer crossings than $\Psi$ and the restrictions to $H$ and $K$ do not cross.
\end{proof}

\subsection{Removing the two crosscaps of the Klein bottle}

\begin{prop}\label{twist}
Let $\Psi$ denote a drawing of a graph $G$ on the Klein bottle. Let $a'$ be a simple curve freely homotopic to $a$ and $m'$ a simple curve freely homotopic to $m$ such that neither $a'$ nor $m'$ contains any vertex of $G$, and such that $a'$ and $m'$ cross only once. Then there is a drawing $\Psi'$ of $G$ on the Euclidean plane such that
$$ \cro (\Psi') = \cro (\Psi) + \cro(a',\Psi) \times \cro(m',\Psi)+\frac{\cro(m',\Psi) (\cro(m',\Psi) - 1)}{2} .$$
\end{prop}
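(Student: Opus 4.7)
The plan is to cut the Klein bottle open along $a'\cup m'$, obtain a fundamental polygon that is a topological disk, and then redraw the cut edges inside this disk. The asymmetry between $a'$ (one-sided) and $m'$ (two-sided) will translate into the asymmetry between $\cro(a',\Psi)$ and $\cro(m',\Psi)$ in the final formula. Since $a'$ and $m'$ are simple closed curves meeting transversely at one point, their union is a bouquet of two circles, with Euler characteristic $-1$; a regular neighborhood $N$ satisfies $\chi(N)=-1$, so from $\chi(\text{Klein bottle})=0$ we obtain $\chi(\Pi)=1$ for the complement $\Pi=K\setminus\mathrm{int}(N)$. Connectedness of $\Pi$ follows by cutting first along the one-sided curve $a'$ (which leaves a M\"obius strip) and then along the essential arc to which $m'$ reduces (which leaves a disk), so $\Pi$ is a disk.

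I identify $\Pi$ with the unit square $[0,1]^2$ so that the bottom and top sides are the two copies of $m'$ and the left and right sides are the two copies of $a'$. Because $m'$ is two-sided and $a'$ is one-sided, the gluings that recover the Klein bottle are, as one checks by inspecting tubular neighborhoods, $(s,0)\sim(1-s,1)$ on the $m'$-sides and $(1,t)\sim(0,t)$ on the $a'$-sides. Under this identification, the drawing $\Psi$ restricts to a drawing on $[0,1]^2$; writing $p=\cro(a',\Psi)$ and $q=\cro(m',\Psi)$, each of the $p$ edges crossing $a'$ gives two sub-arcs ending at points $(1,t_i)$ and $(0,t_i)$ at common height, and each of the $q$ edges crossing $m'$ gives two sub-arcs ending at $(s_j,0)$ and $(1-s_j,1)$ at interlaced positions.

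To build the planar drawing $\Psi'$, I reconnect each cut edge inside the square: for each $a'$-crossing I use the horizontal segment from $(0,t_i)$ to $(1,t_i)$, and for each $m'$-crossing I use the straight segment from $(s_j,0)$ to $(1-s_j,1)$. The crossing count is then immediate: the $p$ horizontal segments are pairwise disjoint; two straight diagonals with $s_{j_1}<s_{j_2}$ have interlaced endpoints on $\partial\Pi$ and therefore cross exactly once, contributing $\binom{q}{2}$ crossings; and each horizontal segment meets each diagonal exactly once, contributing $pq$ crossings. Adding these $\binom{q}{2}+pq$ new crossings to the $\cro(\Psi)$ original crossings (which lie in the interior of $\Pi$ and are unchanged) yields $\cro(\Psi')=\cro(\Psi)+pq+\binom{q}{2}$, which is the claimed formula. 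The main obstacle is verifying the specific flip/identity form of the two boundary identifications, which is the topological heart of the argument and is precisely what allows the $a'$-arcs to be drawn pairwise disjoint while forcing the $m'$-arcs to cross pairwise.
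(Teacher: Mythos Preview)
Your polygon model is correct (the identifications $(s,0)\sim(1-s,1)$ and $(0,t)\sim(1,t)$ do give the Klein bottle with $m'$ two-sided and $a'$ one-sided), but the crossing count has a genuine gap. You draw the reconnecting arcs \emph{across the interior} of the square $\Pi$, and that interior is precisely where the whole restricted drawing $\Psi|_\Pi$ lives. Hence every horizontal segment and every diagonal will, in general, meet many edges of the original drawing, and none of those intersections appear in your tally. Your term $pq$ counts only the crossings between the $p$ horizontal arcs and the $q$ diagonal arcs; it does not count crossings of either family with $\Psi|_\Pi$, and those can be arbitrarily large. (You cannot push $\Psi|_\Pi$ into a small region to avoid this, because the positions $t_i$ and $s_j$ on $\partial\Pi$ are dictated by the drawing itself.)

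The paper sidesteps this by cutting only along $m'$, obtaining a cylinder on which $a'$ becomes an arc joining the two boundary circles. The $q$ severed edges are then rerouted inside a thin neighbourhood of $a'$; in that neighbourhood the only part of $\Psi$ present consists of the $p$ edge-strands that cross $a'$, so each rerouted edge acquires exactly $\cro(a',\Psi)$ crossings with the old drawing --- this is the actual source of the $p\cdot q$ term --- while the orientation-reversing gluing of the two boundary circles forces the $q$ rerouted arcs to cross pairwise, giving the $\binom{q}{2}$ term. Your approach can be repaired, but the repair is essentially to route the $m'$-arcs along the $a'$-sides of the square rather than through its middle, and then there is no need to cut the $a'$-edges at all; at that point you have recovered the paper's argument.
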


\begin{proof}
We cut the Klein bottle open along $m'$, disconnecting $\cro(m',\Psi)$ edges of $\Psi$. By definition of $m$, the resulting surface is a cylinder. We reconnect the cut edges such that their new part remains in a small neighborhood of $a'$, creating exactly $\cro(a',\Psi)$ crossings for each cut edge. Moreover, we can draw the $\cro(m',\Psi)$ edges so that they cross each other only once. We obtain a drawing of $G$ on the cylinder with the desired crossing number, therefore a drawing $\Psi '$ on the Euclidean plane with the same crossing number.
\end{proof}

\section{Maximum number of pairwise edge-disjoint one-sided circuits}
\label{Section:pro}

\begin{prop}
\label{pro1'}
Let $\Psi$ be an embedding of an eulerian graph on the Klein bottle. Then the maximum number of pairwise edge-disjoint one-sided circuits equals 
$$
\min(\cro([a],\Psi)+\cro([b],\Psi),\cro([m],\Psi)).
$$
Moreover, we can decompose $\Psi$ into a collection of circuits $I$ that achieves the maximum number of one-sided circuits and such that the number of $m$-circuits in $I$ is
$$
\frac{1}{2} \left( \cro([a],\Psi)+\cro([b],\Psi) - \min(\cro([a],\Psi)+\cro([b],\Psi),\cro([m],\Psi)) \right).
$$
\end{prop}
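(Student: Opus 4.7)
My plan is to combine the De Graaf--Schrijver decomposition (Theorem \ref{thm:de_graaf}) with local surgery exchanges at common vertices of circuits in $\Psi$.

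For the upper bound, suppose $\Psi$ contains $k$ pairwise edge-disjoint one-sided circuits $C_1, \ldots, C_k$. By Negami's classification each $C_i$ is freely homotopic either to $a$ or to $b$; write $p$ for the count in $[a]$ and $q$ for the count in $[b]$. Take a curve $c_a \in [a]$ realizing $\cro([a], \Psi)$. Orthogonality of $[a]$ and $[b]$ forces $\cro(c_a, C_i) \ge 1$ for each $C_i$ in $[b]$, whence $\cro([a], \Psi) \ge q$; symmetrically $\cro([b], \Psi) \ge p$, so $k \le \cro([a], \Psi) + \cro([b], \Psi)$. Since $[m]$ is orthogonal to both $[a]$ and $[b]$, a minimal representative of $[m]$ also crosses each one-sided $C_i$ at least once, giving $k \le \cro([m], \Psi)$.

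For the achievability and the $m$-count formula, apply Theorem \ref{thm:de_graaf} to produce an initial decomposition $I_0$ of $\Psi$ satisfying $\cro([c], \Psi) = \sum_{d \in I_0} \cro([c], [d])$ for every closed curve $c$. Each $d \in I_0$ is freely homotopic to one of $a, b, m, e$ or is contractible; write $n_a, n_b, n_m, n_e$ for the respective counts. Substituting the pairwise minimum crossings on the Klein bottle (the classes $[a], [b], [m]$ are pairwise orthogonal; $[e]$ is disjoint from $[a]$ and $[b]$; and each of $[a], [b], [m], [e]$ is self-disjoint) into this identity gives a small linear system that links $\cro([a], \Psi)$, $\cro([b], \Psi)$, $\cro([m], \Psi)$ to $(n_a, n_b, n_m, n_e)$. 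Direct manipulation shows that the target values $n_a + n_b = \min(\cro([a], \Psi) + \cro([b], \Psi), \cro([m], \Psi))$ and $n_m = \frac{1}{2}(\cro([a], \Psi) + \cro([b], \Psi) - \min(\ldots))$ are exactly the solutions this system selects once we can force $n_e = 0$ in the regime $\cro([m], \Psi) \le \cro([a], \Psi) + \cro([b], \Psi)$.

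To reach these target counts from $I_0$, I will repeatedly perform surgery exchanges: at any vertex $v$ lying on two circuits of the current decomposition, the four incident edges can be re-paired in one of two alternative ways, yielding either a single merged circuit or two new ones. The resulting free homotopy classes are governed modulo $2$ by the identities $[a]+[b]=[m]$, $[a]+[m]=[b]$, $[b]+[m]=[a]$, and $[e]+[\cdot]=[\cdot]$. Using these, the key moves are splitting an $[m]$-circuit at a vertex it shares with an $[a]$- or $[b]$-circuit into an $([a], [b])$-pair (raising $n_a + n_b$ by one and lowering $n_m$ by one), and absorbing excess $[e]$-circuits into $[m]$- or one-sided circuits without altering $n_a + n_b$ or $n_m$.

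The hardest step will be ensuring the required surgery is always available before the target is reached: one must show that whenever $n_a + n_b$ is still below the target, an $[m]$-circuit (respectively $[e]$-circuit) in the current decomposition shares a vertex with an $[a]$- or $[b]$-circuit on which to perform the exchange. This comes down to a counting argument using the linear system derived above: so long as any equality required for the target counts fails, the crossing identities force enough intersections between distinct homotopy classes to locate the needed vertex. When no further move is possible, the counts automatically satisfy the formulas stated in the proposition.
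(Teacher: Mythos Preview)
Your high-level architecture matches the paper's: obtain an upper bound from crossing inequalities, apply the De~Graaf--Schrijver decomposition to get counts $n_a,n_b,n_m,n_e$ linked to $\cro([a],\Psi),\cro([b],\Psi),\cro([m],\Psi)$ by a linear system, and then perform local surgeries to adjust the counts. However, the specific surgery you propose is the wrong one, and the effect you claim for it is inconsistent with your own homotopy identities.

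You write that the key move is to ``split an $[m]$-circuit at a vertex it shares with an $[a]$- or $[b]$-circuit into an $([a],[b])$-pair, raising $n_a+n_b$ by one and lowering $n_m$ by one.'' But re-pairing at a vertex common to an $m$-circuit and an $a$-circuit involves the edges of \emph{both} circuits; by your identity $[a]+[m]=[b]$ the merged object is in $[b]$, so the net effect is $(n_a,n_b,n_m)\to(n_a-1,\,n_b+1,\,n_m-1)$, leaving $n_a+n_b$ unchanged. No re-pairing of an $(m,a)$ or $(m,b)$ pair can increase $n_a+n_b$. The move that actually works, and the one the paper uses, is to take an $m$-circuit together with an $e$-circuit. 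Since $[m]$ and $[e]$ are orthogonal, they must share a vertex in the embedding, and their union can be re-decomposed into an $a$-circuit and a $b$-circuit (consistent with $[m]+[e]=[a]+[b]$ in $H_1(\mathbb Z/2)$). This sends $(n_a,n_b,n_m,n_e)\to(n_a+1,\,n_b+1,\,n_m-1,\,n_e-1)$. Iterating $r=\min(n_m,n_e)$ times lands exactly on the target counts, and the availability you flag as ``hardest'' is in fact immediate from the orthogonality of $[m]$ and $[e]$; no delicate counting argument is needed.

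A smaller issue: in your upper bound you invoke Negami's classification to conclude that every one-sided circuit is freely homotopic to $a$ or $b$, but Negami's statement is for \emph{simple} closed curves, and circuits of an embedded Eulerian graph may revisit vertices. The paper avoids this by arguing directly that any one-sided closed curve must have positive geometric intersection with $m$, and with at least one of $a,b$; that is what you should use.
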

 
\begin{proof}
Let $\Psi$ be an embedding of an eulerian graph $G$ on the Klein bottle. Consider a collection $I$ of edge-disjoint one-sided circuits of $\Psi$. Every one-sided circuit intersects either $a$ or $b$. Consequently, for each circuit $c$ in $I$, $\cro([a],c) \ge 1$ or $\cro([b],c)\ge 1$. Hence,
$$
\cro([a],\Psi)+\cro([b],\Psi)\ge\sum_{c\in I}(\cro([a],[c])+\cro([b],[c]))
\ge |I|.
$$
Similarly, every one-sided circuit intersects $m$. Hence,
$$
\cro([m],\Psi)\ge\sum_{c\in I}\cro([m],[c]) \ge  |I|.
$$
Therefore the maximum number of pairwise edge-disjoint one-sided circuits is smaller than $\min(\cro([a],\Psi)+\cro([b],\Psi),\cro([m],\Psi))$.

To complete the proof of Proposition \ref{pro1'}, it remains to decompose $\Psi$ into a collection of circuits that contains 
$$
\min(\cro([a],\Psi) + \cro([b],\Psi) , \cro([m],\Psi))
$$ 
one-sided circuits and 
$$
\frac{1}{2}\cro([a],\Psi)+\cro([b],\Psi) - \min(\cro([a],\Psi)+\cro([b],\Psi),\cro([m],\Psi))
$$
$m$-circuits.

Let $I$ be a collection of circuits given by Theorem \ref{thm:de_graaf}, with $n_a$ $a$-circuits, $n_b$ $b$-circuits, $n_m$ $m$-circuits and $n_e$ $e$-circuits. By definition of $I$, the following equalities hold.  
\begin{equation}
\label{E}
\begin{array}{rcl}
\cro([a],\Psi)&=&n_a +n_m
\\ \cro([b],\Psi)&=&n_b+n_m
\\ \cro([m],\Psi) &=&n_a +n_b + 2n_e.
\end{array}
\end{equation}

If $n_m$ or $n_e$ equals zero, then the result follows. Now assume that $n_m$ and $n_e$ are positive.

Let $r=\min(n_m,n_e)$. Consider $r$ distinct $m$-circuits $m_1,\ldots,m_{r}$ and $r$ distinct $e$-circuits $e_1,\ldots,e_{r}$ in $I$. For every integer $i$ between $1$ and $r$, the circuits $m_i$ and $e_i$ intersect and can be decomposed into an $a$-circuit $a_i$ and an $b$-circuit $b_i$. Thus, we get $n_a+r$ $a$-circuits, $n_b+r$ $b$-circuits, $n_m - r$ $m$-circuits and $n_e - r$ $e$-circuits. The resulting collection of circuits $I'$ still decomposes $\Psi$. According to (\ref{E}),
$$
(n_a+r)+(n_b+r)=\min(\cro([a],\Psi) + \cro([b],\Psi) , \cro([m],\Psi)),
$$
and 
$$
2(n_m -r) = \cro([a],\Psi) + \cro([b],\Psi) - \min(\cro([a],\Psi) + \cro([b],\Psi) , \cro([m],\Psi)).
$$
Thus $I'$ is the desired collection of circuits.
\end{proof}

\section{Maximum number of edge-disjoint $a$-circuits}
\label{Section:pro2}

\begin{prop}
\label{pro2'}
  Let $\Psi$ be an embedding of an eulerian graph on the Klein bottle. Then the maximum number of edge-disjoint $a$-circuits equals 
$$
\min(\cro([a],\Psi),\cro([m],\Psi)).
$$
\end{prop}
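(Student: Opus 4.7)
The plan is to follow the structure of the proof of Proposition \ref{pro1'}: first establish the upper bound, then use Theorem \ref{thm:de_graaf} together with surgery operations to construct a collection of edge-disjoint $a$-circuits of the right size.

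For the upper bound, take any collection $J$ of edge-disjoint $a$-circuits in $\Psi$. By definition each $c \in J$ satisfies $\cro([a],[c]) \geq 1$. Moreover, the $\mathbb{Z}/2$-homology class of an $a$-circuit equals $[a]$, since its algebraic intersection with $[a]$ is odd while its algebraic intersection with $[b]$ vanishes; since $[a] \cdot [m] = 1$ modulo $2$ on the Klein bottle, we also have $\cro([m],[c]) \geq 1$. Standard edge-disjointness arguments, exactly as in the upper bound portion of the proof of Proposition \ref{pro1'}, then yield $|J| \leq \cro([a],\Psi)$ and $|J| \leq \cro([m],\Psi)$, hence $|J| \leq \min(\cro([a],\Psi),\cro([m],\Psi))$.

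For the lower bound, apply Theorem \ref{thm:de_graaf} to obtain a decomposition $I$ of $\Psi$ with $n_a$, $n_b$, $n_m$, $n_e$ circuits of the four types, satisfying equations (\ref{E}). First, perform the $(m_i, e_i) \to (a_i, b_i)$ operation from the proof of Proposition \ref{pro1'} exactly $r = \min(n_m, n_e)$ times, which preserves (\ref{E}) and produces $n_a + r$ $a$-circuits. If $r = n_m$, then the collection already has $n_a + n_m = \cro([a],\Psi)$ $a$-circuits, and (\ref{E}) together with $n_m = 0$ gives $\cro([m],\Psi) \geq \cro([a],\Psi)$, completing this case. Otherwise $r = n_e < n_m$, and I introduce a second surgery $(m, b) \to (a, \gamma)$, where $\gamma$ is a null-homotopic leftover circuit; this is consistent with the homological identity $[m] + [b] = [a] + 0$ in $\mathbb{Z}/2$-homology of the Klein bottle. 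Applying this operation $s = \min(n_m - n_e, n_b + n_e)$ times and performing the resulting case analysis (on whether $s$ exhausts the remaining $m$-circuits or the $b$-circuits first) shows that the final number of $a$-circuits reaches exactly $\min(\cro([a],\Psi), \cro([m],\Psi))$.

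The main obstacle is justifying this second surgery. We must verify that (i) we can always find an $m$-circuit and a $b$-circuit in the current collection sharing a vertex of $\Psi$, propagating through chains of circuits as needed using the connectedness of the underlying eulerian graph, and (ii) a well-chosen local reshuffling at such a vertex actually produces an $a$-circuit, i.e., the resulting pair $(c_1', c_2')$ has $\cro([b], c_1') = 0$ for the candidate $a$-circuit $c_1'$. The critical quantity to control is $\cro([b], c_1 \cup c_2)$ for the selected pair, which must vanish in order for the surgery to yield a genuine $a$-circuit rather than a ``double-$b$'' circuit of mixed type. A careful selection argument, analogous to the one implicit in the proof of Proposition \ref{pro1'} for $(m, e) \to (a, b)$ but more delicate here, will be the main technical challenge.
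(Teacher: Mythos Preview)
Your outline matches the paper's proof essentially step for step: the same upper bound, the de Graaf--Schrijver decomposition, the first surgery $(m_i,e_i)\to(a_i,b_i)$ performed $r=\min(n_m,n_e)$ times, and then the second surgery combining an $m$-circuit with a $b$-circuit into an $a$-circuit performed $s=\min(n_m-r,n_b+r)$ times, followed by the same arithmetic via (\ref{E}). The only difference is tone: where you flag the intersection of a $b$-circuit with an $m$-circuit and the $a$-type of the result as the ``main obstacle'' requiring a connectedness/chain-propagation argument, the paper dispatches it in one line by using that an $m$-circuit is by definition orthogonal to $[b]$ (hence must meet any $b$-circuit) and that the recombined curve is an $a$-circuit --- so you can drop the elaborate justification you anticipate.
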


\begin{proof}
Let $\Psi$ be an embedding of an eulerian graph on the Klein bottle. Consider a collection $I$ of edge-disjoint $a$-circuits of $\Psi$. Every $a$-circuit intersects $a$. Hence,
$$
\cro([a],\Psi)\ge\sum_{c\in I}\cro([a],c)\ge |I|.
$$
Similarly, every $a$-circuit intersects $m$. Hence,
$$
\cro([m],\Psi)\ge\sum_{c\in I}\cro([m],c)
 \ge |I|.
$$
Therefore the maximum number of pairwise edge-disjoint $a$-circuits is smaller than $\min(\cro([a],\Psi)+\cro([b],\Psi),\cro([m],\Psi))$.

To complete the proof of Proposition \ref{pro2'}, it remains to exhibit $\min(\cro([a],\Psi), \cro([m],\Psi))$ $a$-circuits.

Let $I$ be a collection of circuits of $\Psi$ as stated in Theorem \ref{thm:de_graaf}, with $n_a$ $a$-circuits, $n_b$ $b$-circuits, $n_m$ $m$-circuits and $n_e$ $e$-circuits. If $n_a$ or $n_m$ equals zero, then the result follows. Now assume that $n_a$ and $n_m$ are positive.

Let $r=\min(n_m,n_e)$. Consider $r$ distinct $m$-circuits $m_1,\ldots,m_{r}$ and $r$ distinct $e$-circuits $e_1,\ldots,e_{r}$ in $I$. For every integer $i$ between $1$ and $r$, the circuits $m_i$ and $e_i$ intersect and can be decomposed into an $a$-circuit $a_i$ and an $b$-circuit $b_i$. Consequently, we get $n_a+r$ $a$-circuits, $n_b+r$ $b$-circuits, $n_m-r$ $m$-circuits and $n_e-r$ $e$-circuits. The resulting collection of circuits $I'$ still decomposes $\Psi$.

Let $s=\min(n_m-r,n_b+r)$. If $n_m\le n_e$, then $s$ equals zero and we have found $\cro([a],\Psi)$ $a$-circuits. Otherwise, consider $s$ distinct $b$-circuits $b_1,\ldots,b_s$ and $s$ distinct $m$-circuits $m'_1,\ldots,m'_s$ of $I'$. For every integer $i$ between $1$ and $r$, the circuits $b_i$ and $m'_i$ intersect and can be decomposed into an $a$-circuit $a'_i$. And so, we get $n_a+r+s$ $a$-circuits.

According to (\ref{E}),
\begin{eqnarray*}
 n_a+r+m &=& n_a+r+\min(n_m-r,n_b+r)
\\ &=&n_a + \min(n_m,n_b + 2n_e)
\\ &=&\min(\cro([a],\Psi), \cro([m],\Psi)).
\end{eqnarray*}
Proposition \ref{pro2'} is proved.
\end{proof}

Note that Proposition \ref{pro2'} still holds when replacing $a$ by $b$.

\section{Main result}
\label{Section:proof}

This section is devoted to the proof of our main result. First, we need to prove the following special case of the theorem.

\begin{lem}
\label{lem:disjoint}
Let $G$ be the disjoint union of two eulerian connected graphs $H$ and $K$. If $G$ has a drawing on the Klein bottle such that the restrictions to $H$ and $K$ are embeddings that cross each other, then we can find another drawing of $G$ on the Klein bottle with strictly fewer crossings such that the restrictions to $H$ and $K$ do not cross. 
\end{lem}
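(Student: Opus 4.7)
The plan is to mirror the strategy of Corollary~\ref{cor:planproj}, using Proposition~\ref{twist} in place of Proposition~\ref{switch} and the lower bounds supplied by Propositions~\ref{pro1'} and~\ref{pro2'}. Writing $\Psi_H$ and $\Psi_K$ for the embeddings of $H$ and $K$, note that $\cro(\Psi)$ equals the number of $H$--$K$ crossings. The aim is to apply Proposition~\ref{twist} to one of $\Psi_H$, $\Psi_K$ (with suitably chosen curves $a'$ or $b'$ together with $m'$), place the resulting planar drawing inside a disk of the Klein bottle disjoint from the other embedding, and thereby obtain a drawing $\Psi'$ of $G$ with disjoint restrictions whose crossing number equals the number of extra crossings produced by Proposition~\ref{twist}. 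It then remains to show that this number is strictly smaller than $\cro(\Psi)$.

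First, I would bound $\cro(\Psi)$ below using Proposition~\ref{pro2'}. That proposition exhibits $\min(\cro([a],\Psi_H),\cro([m],\Psi_H))$ pairwise edge-disjoint $a$-circuits in $\Psi_H$; each such circuit is a simple essential closed curve orthogonal to $[a]$ and disjoint from $[b]$, hence freely homotopic to $a$ by Negami's classification, so it crosses $\Psi_K$ in at least $\cro([a],\Psi_K)$ points. Edge-disjointness in $\Psi_H$ keeps these crossings distinct, yielding
\[
\cro(\Psi)\;\ge\;\min(\cro([a],\Psi_H),\cro([m],\Psi_H))\cdot\cro([a],\Psi_K),
\]
together with the three symmetric inequalities obtained by replacing $a$ by $b$ and/or swapping $H$ with $K$. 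A further bound combining $a$- and $b$-circuits follows analogously from Proposition~\ref{pro1'}.

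Second, I would fix simple curves $a'\in[a]$, $b'\in[b]$ and $m'\in[m]$ on the Klein bottle avoiding the vertices of $G$ and realizing $\cro([a],\Psi_H)$, $\cro([b],\Psi_H)$ and $\cro([m],\Psi_H)$, with $a'\cap m'$ and $b'\cap m'$ each reduced to a single transverse point; such curves exist by elementary homotopy surgery on the Klein bottle. Proposition~\ref{twist} applied to the pair $(a',m')$, to $(b',m')$, and analogously to $\Psi_K$, yields four candidate planar drawings with crossing numbers
\[
\cro([x],\Psi_F)\cdot\cro([m],\Psi_F)+\binom{\cro([m],\Psi_F)}{2},\qquad x\in\{a,b\},\ F\in\{H,K\}.
\]
I would pick the candidate achieving the smallest value and re-draw the corresponding subgraph inside a disk of the Klein bottle disjoint from the other embedding to form $\Psi'$.

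The final step, and the main obstacle, is a case analysis showing that the minimum of these four candidate quantities is always strictly smaller than $\cro(\Psi)$, by matching each candidate against the lower bounds of the first step. The delicate regime is when all six numbers $\cro([x],\Psi_F)$ for $x\in\{a,b,m\}$ and $F\in\{H,K\}$ are positive and of comparable magnitude, since any individual pairing of a candidate with its matching lower bound is then close to tight; one must switch between pairings according to the relative orderings of these six quantities. Degenerate cases---when some $\cro([m],\Psi_F)=0$ or $\cro([x],\Psi_F)=0$---are much easier, since the corresponding reduction becomes essentially free of extra crossings whereas $\cro(\Psi)>0$ by hypothesis.
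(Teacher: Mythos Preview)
Your overall strategy---bound $\cro(\Psi)$ from below via Propositions~\ref{pro1'} and~\ref{pro2'}, redraw one of $H,K$ on a simpler subsurface, and compare---matches the paper's. The gap is that you restrict the redrawing step to Proposition~\ref{twist} alone; the four ``twist'' candidates do not always undercut the available lower bounds, and the paper must also use Proposition~\ref{switch}.

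A minimal instance: let each of $\Psi_H,\Psi_K$ consist of one loop homotopic to $a$ and one homotopic to $b$, sharing a single vertex. Then $h_a=h_b=k_a=k_b=1$, $h_m=k_m=2$, and one can realise $\cro(\Psi)=2$. Every one of your twist candidates costs $1\cdot 2+\binom{2}{2}=3>2$, so the comparison fails; with $n$ loops of each type one gets $\cro(\Psi)=2n^{2}$ against a minimum twist cost of $4n^{2}-n$. The paper covers exactly this regime (its Case~1, and again Case~2.2.2) by applying Proposition~\ref{switch} once to $\Psi_H$ and once to $\Psi_K$, placing the resulting drawings on the two M\"obius strips of the Klein bottle separated by a curve in $[e]$; the cost $\binom{m_1}{2}+\binom{m_2}{2}$ is then strictly below $\cro(\Psi)$. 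So Proposition~\ref{switch} is not optional.

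A secondary point: you defer the case analysis entirely, calling it ``the main obstacle'', but in the paper this \emph{is} the proof---five subcases governed by the position of $h_m,k_m$ relative to the ordered values $m_1\le m_2$ of $\{h_a,h_b,k_a,k_b\}$, with the choice between Proposition~\ref{switch} and Proposition~\ref{twist} (and which of $H,K$ to redraw) changing from case to case. Your outline does not yet contain the idea that makes the delicate regime go through, namely this alternation between the two surgeries.
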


\begin{proof}
Let $\Psi$ be a drawing of $G$ on the Klein bottle such that the restrictions $\Psi_H$ to $H$ and $\Psi_K$ to $K$ are embeddings. 

To prove Lemma \ref{lem:disjoint}, it is enough to find two drawings $\Psi_H'$ and $\Psi_K'$ of $H$ and $K$ on two disjoint subsurfaces of the Klein bottle such that the sum of the crossings of $\Psi'_H$ and the crossings of $\Psi'_K$ is strictly less than the crossings of $\Psi$. Indeed, let $\Psi_H'$ and $\Psi_K'$ be such drawings and let $\Psi'$ denote the drawing of $G$ on the Klein bottle plane obtained by disjoint union of the drawings $\Psi'_H$ and $\Psi_K'$. Then the number of crossings of $\Psi'$ is the sum of the crossings of $\Psi'_H$ and the crossings of $\Psi'_K$. It follows that the drawing $\Psi'$ of $G$ on the Klein bottle has strictly fewer crossings than $\Psi$ and the restrictions to $H$ and $K$ do not cross.

For convenience, we denote $\cro([a],\Psi_H)$, $\cro([b],\Psi_H)$ and $\cro([m],\Psi_H)$ by $h_{a}$, $h_{b}$ and $h_{m}$, respectively. Similarly, we denote $\cro([a],\Psi_K)$, $\cro([b],\Psi_K)$ and $\cro([m],\Psi_K)$ by $k_{a}$, $k_{b}$ and $k_{m}$, respectively. 

We can assume without loss of generality that $h_{m} \leq k_{m}$. 

By Proposition \ref{pro1}, there exist a decomposition of $\Psi_H$ into pairwise edge-disjoint circuits with $\min(h_{m} , h_{a} + h_{b}) $ one-sided circuits and $\left( h_{a} + h_{b} - \min (h_{m} , h_{a} + h_{b})\right) /2$ $m$-circuits. Each one-sided circuit crosses $\Psi_K$ at least $\min( k_{a} , k_{b})$ times, and each $m$-circuit crosses $\Psi_K$ at least $k_{m}$ times. Counting the crossings between $\Psi_H$ and $\Psi_K$ gives the following inequality.
\begin{equation}\label{Q1}
\cro (\Psi) \geq \min(h_{m} , h_{a} + h_{b}) \times \min(k_{a} ,  k_{b}) + \frac{1}{2} \left( h_{a} + h_{b} - \min(h_{m} , h_{a} + h_{b})\right) \times k_{m} .
\end{equation}
With a similar decomposition of $\Psi_K$ we obtain
\begin{equation}\label{Q2}
\cro(\Psi) \geq \min(k_{m} , k_{a} + k_{b}) \times \min(h_{a} ,  h_{b}) + \frac{1}{2} \left( k_{a} + k_{b} - \min(k_{m} , k_{a} + k_{b}) \right) \times h_{m} .
\end{equation}

Beside, by Proposition \ref{pro2}, there exist $\min( k_{a} , k_{m})$ pairwise edge-disjoint $a$-circuits of $\Psi_K$. Each of them crosses $\Psi_H$ at least $h_{a}$ times, therefore
\begin{equation}\label{Q3}
\cro (\Psi) \geq \min( k_{a} , k_{m} )  \times h_{a}.
\end{equation}
Similarly, considering $b$-circuits gives
\begin{equation}\label{Q4}
\cro (\Psi) \geq \min( k_{b} , k_{m} )  \times h_{b}.
\end{equation}

\noindent Let $m_1 \le m_2 \le m_3 \le m_4$ be an ordering of the numbers $h_{a}, h_{b} ,k_{a} , k_{b}$.

\noindent\textbf{(Case 1)} 
If $k_m\ge m_2$, then applying twice Proposition \ref{switch} provides a drawing $\Psi_H '$ of $H$ and a drawing $\Psi_K'$ of $K$ on disjoint subsurfaces of the Klein bottle such that
$$
\cro (\Psi_H ')+\cro(\Psi_K') = \frac{1}{2}m_1 \times (m_1 - 1) + \frac{1}{2}m_2 \times (m_2 - 1).
$$
By definition of $m_2$ and since $k_m\ge m_2$,
$$
m_2 \times m_2 \le \max(\min( k_{a} , k_{m} ) h_{a},\min( k_{b} , k_{m} ) h_{b}).
$$
Hence, by (\ref{Q3}) and (\ref{Q4}),
$$
\cro (\Psi_H ')+\cro(\Psi_K') < m_2\times m_2 \le \cro (\Psi).
$$

\medskip

\noindent\textbf{(Case 2)}
If $k_{m}<m_2$, then 
$$
h_m\le k_m \le m_1+m_2 \le h_{a}+h_{b}.
$$
Thus, (\ref{Q1}) becomes
\begin{equation}\label{Q'1}
\cro (\Psi) \geq h_{m} \times \min(k_{a} ,  k_{b}) + \frac{1}{2}( h_{a} + h_{b} - h_{m} ) \times k_{m}.
\end{equation}
Since $k_{m}\le k_{a}+k_{b}$, the (\ref{Q2}) becomes
\begin{equation}\label{Q'2}
\cro (\Psi) \geq k_{m} \times h_{a} + \frac{1}{2} ( k_{a} + k_{b} - k_{m} ) \times h_{m}.
\end{equation}

\medskip

\noindent\textbf{(Case 2.1)}
If $h_m \le k_{a} + k_{b} - k_{m}$, then by Proposition \ref{twist} there exists a drawing $\Psi_H '$ of $H$ on the Euclidean plane such that
$$
\cro (\Psi_H ') = h_{m} \times h_{a}+\frac{1}{2}h_{m} \times (h_{m} - 1).
$$

We get by (\ref{Q'2})
\begin{align*}
\cro (\Psi_H ') + \cro (\Psi_K) & = h_{m} \times h_{a}+\frac{1}{2}h_{m} \times (h_{m} - 1)   \\
 & \leq k_{m} \times h_{a}+\frac{1}{2}(k_{a} + k_{b} - k_{m}) \times (h_{m} - 1)  \\
 & < \cro (\Psi).
\end{align*}

\medskip

\noindent\textbf{(Case 2.2)}
If $h_{m} > k_{a} + k_{b} - k_{m}$, then $k_{m} < m_2$ implies
$$
h_{m}+\max(k_{a},k_{b}) \ge h_{m}+k_{m} > k_{a}+k_{b}.
$$
Hence $h_{m}>\min(k_{a},k_{b})=m_1$. 

\medskip

\noindent\textbf{(Case 2.2.1)}
If $m_{1} < k_{m}/2$ then we apply Proposition \ref{twist}. There exists a drawing $\Psi_K '$ of $K$ on the Euclidean plane such that
$$
\cro (\Psi_K ') = k_{m} \times m_{1} + \frac{1}{2}k_{m} \times (k_{m} - 1).
$$
By (\ref{Q'1}),
\begin{eqnarray*}
\cro(\Psi_H) +\cro (\Psi_K ') & \le & k_{m} \times m_{1} + \frac{1}{2}k_{m} \times (k_{m} - 1) \\
& \leq & h_{m} \times m_{1} + (k_{m} - h_{m}) \times m_{1} + \frac{1}{2}k_{m} \times (k_{m} - 1) \\
& < & h_{m} \times m_{1} + (2  k_{m} - h_{m}) \times \frac{1}{2}k_{m} \\
& < & h_{m} \times \min(k_{a},k_{b}) + \frac{1}{2}( h_{a} + h_{b} - h_{m} ) \times k_{m} \\
& < & \cro(\Psi).
\end{eqnarray*}

\medskip

\noindent\textbf{(Case 2.2.2)}
If $m_1 \geq k_{m}/2$ and $m_2 < 2 k_{m}$, then applying twice Proposition \ref{switch} provides a drawing $\Psi_H '$ of $H$ and a drawing $\Psi_K'$ of $K$ on disjoint subsurfaces of the Klein bottle such that
$$
\cro (\Psi_H ')+\cro(\Psi_K') = \frac{1}{2}m_1 \times (m_1 - 1) + \frac{1}{2}m_2 \times (m_2 - 1).
$$
Since 
$$
m_{1}<h_{m}, \quad m_{2}>k_{m}, \quad m_{2}\le h_{a} \quad \textrm{and} \quad m_{1}+m_{2}\le k_{a}+k_{b},
$$ 
we get, by (\ref{Q'2}), 
\begin{eqnarray*}
\cro (\Psi_H') + \cro (\Psi_K ') & = &  \frac{1}{2}m_{2} \times (m_{2} - 1) + \frac{1}{2}m_{1} \times (m_{1} - 1)  \\
& < & \frac{1}{2}m_{2} \times 2k_{m} +\frac{1}{2}m_{1} \times (h_{m}-1) \\
& < &  h_{a} \times k_{m} + \frac{1}{2}(m_{1}+m_{2}-k_{m})\times h_{m} \\
& < & h_{a} \times k_{m} +\frac{1}{2}(k_{a}+k_{b}-k_{m})\times h_{m} \\
& < & \cro (\Psi).
\end{eqnarray*}

\medskip

\noindent\textbf{(Case 2.2.3)}
If $m_{1} \geq k_{m}/2$ and $m_{2} \geq 2 k_{m}$ then we apply Proposition \ref{twist}. There exists a drawing $\Psi_K '$ of $K$ on the Euclidean plane such that
$$
\cro (\Psi_K ') = k_{m} \times m_{1} + \frac{1}{2}k_{m} \times (k_{m} - 1).
$$
Hence, by (\ref{Q3}),
\begin{eqnarray*}
\cro (\Psi_H) + \cro (\Psi_K ') & = &  k_{m} \times m_{1} + \frac{1}{2}k_{m} \times (k_{m} - 1) \\
& < & k_{m}\times \left(m_{1}+\frac{1}{2}k_{m}\right) \\
& < & k_{m} \times (2m_{1}) \\
& < & m_{1}\times m_{2} \\
& < & \cro (\Psi).
\end{eqnarray*}
\end{proof}

Now, we may prove the Theorem.

\begin{thm}
  Let $G$ be the disjoint union of two connected graphs $H$ and $K$. For every optimal drawing of $G$ on the Klein bottle, the restrictions to $H$ and $K$ do not cross.
\end{thm}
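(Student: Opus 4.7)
The plan is to reduce to the eulerian-embedding case handled by Lemma~\ref{lem:disjoint} via a planarization followed by an edge-doubling, each of which is inverted at the end. Suppose, toward a contradiction, that $\Psi$ is an optimal drawing of $G=H\sqcup K$ on the Klein bottle whose restrictions $\Psi_H$ and $\Psi_K$ cross each other $c_{HK}\ge 1$ times, and let $s_H,s_K$ denote their numbers of self-crossings, so that $\cro(\Psi)=s_H+s_K+c_{HK}$. I will produce a drawing of $G$ on the Klein bottle with strictly fewer crossings.

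Planarize first: at every self-crossing of $\Psi_H$ insert a new degree-four vertex, obtaining a connected graph $\hat H$ whose induced drawing $\hat\Psi_{\hat H}$ is an embedding, and likewise build $\hat K$ from $K$. Then double every edge of $\hat H$ and of $\hat K$ by drawing a tight parallel copy inside a thin tubular neighborhood of the original curve. The resulting graphs $\hat H'$ and $\hat K'$ are connected and eulerian, their restrictions remain embedded, and the number of crossings between them is $4c_{HK}$. Lemma~\ref{lem:disjoint} applied to this situation produces a drawing $\hat\Psi''$ of $\hat H'\sqcup\hat K'$ in which $\hat H'$ and $\hat K'$ lie in disjoint subsurfaces of the Klein bottle and whose total number of self-crossings $s''_{H'}+s''_{K'}$ is strictly less than $4c_{HK}$.

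Now invert both modifications. To undo the doubling, for each edge $e$ of $\hat H$ pick one of its two copies in $\hat H'$ uniformly at random; since each pair of distinct edges of $\hat H$ corresponds to four pairs of doubled copies in $\hat H'$ of which only one is kept, the expected number of self-crossings of the resulting drawing of $\hat H$ is at most $s''_{H'}/4$, and symmetrically for $\hat K$. Some choice therefore yields a drawing of $\hat H\sqcup\hat K$ with no $\hat H$-$\hat K$ crossings and fewer than $c_{HK}$ self-crossings. To undo the planarization, suppress each added vertex of $\hat H$ (and of $\hat K$) by rejoining its four incident edge-halves in the prescribed pairs back into the two edges of $H$ (resp.\ $K$) they came from. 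Depending on the cyclic order at the vertex inherited from the current drawing, this local surgery creates at most one new crossing. Therefore at most $s_H+s_K$ new crossings are created, and the final drawing of $G$ has no $H$-$K$ crossings and strictly fewer than $c_{HK}+s_H+s_K=\cro(\Psi)$ total crossings (a standard clean-up of non-incident pairs that meet more than once only decreases this count), contradicting optimality.

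The main obstacle will be making the planarization inversion rigorous: one must check that at each suppressed vertex the four incident edge-halves belong to exactly two edges of $H$ or of $K$, so that the prescribed pairing is well-defined, and that merging these pairs can only create a crossing at the vertex itself, never elsewhere and never between $H$ and $K$. The edge-doubling, the averaging-based inversion of doubling, and the invocation of Lemma~\ref{lem:disjoint} are otherwise routine.
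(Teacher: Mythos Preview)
Your proposal is correct and follows essentially the same reduction as the paper: planarize self-crossings, double edges to make the pieces eulerian, invoke Lemma~\ref{lem:disjoint}, then undo both steps. The only notable difference is in undoing the doubling: the paper argues that parallel copies can be kept close throughout the surgeries of Lemma~\ref{lem:disjoint} so that crossings come in blocks of four, whereas your averaging argument over random choices of one copy per edge is cleaner and avoids having to track how the constructions in Propositions~\ref{switch} and~\ref{twist} treat nearby parallel curves.
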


\begin{proof}
Let $G$ be the disjoint union of two connected graphs $H$ and $K$. Let $\Psi$ be an optimal drawing on the Klein bottle of $G$. 

First, assume that the restrictions to $H$ and $K$ are embeddings. Duplicate each edge of $G$ and denote by $G'$, $G'_1$, $G'_2$ the resulting eulerian graphs and by $\Psi'$ the resulting drawing. The drawing $\Psi'$ has $4\cro(\Psi)$ crossings. Since $G'$ is eulerian, by Proposition \ref{lem:disjoint}, we can find a drawing $\Psi''$ of $G'$, where the graphs $H'$ and $K'$ do not cross each other, which has strictly less than $4\cro(\Psi)$ crossings. Moreover, we can assume that every pair of parallel edges are drawn close enough to have the same crossings. Therefore, two pairs of parallel edges intersect each other either four times or do not. Thus, by deleting one copy of each edge, we get a drawing of $G$ with strictly less than $\cro(\Psi)$ crossings such that the drawings of $H$ and $K$ do not intersect. 

Secondly, suppose that the restrictions to $H$ and $K$ are not embeddings. Consider the graphs $G''_1$ and $G''_2$ obtained from $H$ and $K$ by adding a vertex for each internal crossing. The corresponding drawings are embeddings and we can now apply what was shown just above. Theorem \ref{thm:main} is proved when we replace the new vertices by the former crossings. 
\end{proof}

\begin{rem}
Applying the same arguments of the proof of Theorem \ref{thm:main} and using Corollary \ref{cor:planproj} instead of Proposition \ref{lem:disjoint}, we can prove that Conjecture \ref{conj} holds for the projective plane.
\end{rem}

\end{document}